\documentclass[a4paper,11pt]{amsart}

\pdfoutput=1
\usepackage{amsmath,amssymb,amsthm,amsfonts}

\usepackage{times}
\usepackage{enumerate}
\usepackage{layout}
\usepackage{verbatim}
\usepackage{epsfig}
\usepackage{graphicx}
\usepackage{pgf}
\usepackage{caption}
\usepackage{subcaption}
\usepackage{xfrac}

\textwidth 15.45cm \textheight 22.1cm \evensidemargin 0cm
\oddsidemargin 0cm \topmargin 0cm
\setlength{\parindent}{0pt}


\newtheorem{theorem}{Theorem}[section]
\newtheorem*{theorem*}{Theorem}
\newtheorem{proposition}[theorem]{Proposition}
\newtheorem{lemma}[theorem]{Lemma}

\newtheorem*{corollary*}{Corollary}

\newtheorem*{conjecture*}{Conjecture}

\theoremstyle{remark}

\newtheorem*{acknowledgments}{Acknowledgments}

\theoremstyle{definition}

\numberwithin{figure}{section}

\newcounter{fig}

\newcommand{\vb}{\vspace{2mm}}
\newcommand{\DD}{{\rm d}}
\newcommand{\hmm}{\hspace{-0mm}}

\def\R{\mathbb R}

\def\E{\mathbb E}

\renewcommand{\vec}[1]{\boldsymbol{#1}} 
\newcommand{\COV}{{\mathbb C}{\rm ov}}
\newcommand{\VAR}{{\mathbb V}{\rm ar}}

\usepackage{color}
\definecolor{light}{gray}{0.8}

\def\ERASE#1{\textcolor{light}{#1}}

\parindent=0pt

\usepackage[T1]{fontenc}


\begin{document}

\title
[A functional CLT for a Markov-modulated infinite-server queue] {A functional central limit theorem for \\a Markov-modulated infinite-server queue}

\author{D.\ Anderson$^{\lowercase{a}}$, j.\ Blom$\,^{\lowercase{c}}$, M.\ Mandjes$\,^{\lowercase{b,c}}$, H.\ Thorsdottir$\,^{\lowercase{c,b,\star}}$, K. de Turck$^{\lowercase{d}}$}


\date{\today}

\begin{abstract}


 The production of molecules in a chemical reaction network is modelled as a Poisson process with a Markov-modulated arrival rate and an exponential decay rate.
We analyze the distributional properties of $M$, the number of molecules, under specific time-scaling; the background process is sped up by $N^{\alpha}$, the arrival rates are scaled by $N$, for $N$ large.
A functional central limit theorem is derived for $M$, which after centering and scaling, converges to an Ornstein-Uhlenbeck process. A dichotomy depending on $\alpha$ is observed. For $\alpha\leq1$ the parameters of the limiting process contain the deviation matrix associated with the background process.

\vb

\noindent {\it Keywords.} Ornstein-Uhlenbeck processes  $\star$ Markov modulation $\star$  central-limit theorems $\star$ martingale methods

\vb

\begin{itemize}
\item[$^a$] Department of Mathematics, 
University of Wisconsin -- Madison, 
480 Lincoln Drive,
Madison WI 53706, United States.
\item[$^b$] Korteweg-de Vries Institute (KdVI) for Mathematics,
University of Amsterdam, Science Park 904, 1098 XH Amsterdam, the Netherlands. 
\item[$^c$] CWI, Science Park 123, P.O. Box 94079, 1090 GB Amsterdam, the Netherlands.
\item[$^d$] TELIN, Ghent University, St.-Pietersnieuwstraat 41,
B9000 Gent, Belgium. 
\end{itemize}
\noindent {Work partially done while D.\ Anderson was visiting CWI and KdVI,
and  K.\ de Turck was visiting KdVI; financial support from 
NWO cluster {\it STAR} (Anderson) and {\it Fonds Wetenschappelijk Onderzoek / Research Foundation -- Flanders} (De Turck) is greatly appreciated. Anderson was also supported under NSF grant DMS-100975 and DMS-1318832.

\noindent
M.\ Mandjes is also with  E{\sc urandom}, Eindhoven University of Technology, Eindhoven, the Netherlands, 
and I{\sc bis}, Faculty of Economics and Business, University of Amsterdam,
Amsterdam, the Netherlands.}

\vb

\noindent $^\star$Corresponding author. Tel.:+31 205924167

\noindent {\it Email addresses}. {\scriptsize \tt{anderson@math.wisc.edu}, \tt{j.g.blom@cwi.nl}, \tt{m.r.h.mandjes@uva.nl}, 

\noindent \tt{h.thorsdottir@cwi.nl}, \tt{kdeturck@telin.ugent.be}}

\vb

\end{abstract}

\maketitle

\hyphenation{mo-lecules}

\newpage

\section{Introduction}
When modeling chemical reaction networks within cells, the dynamics of the numbers of molecules of the various types are often described by deterministic differential equations. These models ignore the inherent stochasticity that may play a role, particularly when the number of molecules are relatively small. 
To remedy this, the use of stochastic representations of chemical networks has been advocated, see e.g. \cite{AraziBenJacobYechiali2004,Cookson_etal2011,Gillespie2007}.

In this paper we use the formulation as in \cite{ANDE,BALL} where
the numbers of molecules evolve as a continuous-time Markov chain. A concise description of this formulation is the following, with our specific model more formally developed in Section \ref{sec:model}.
Consider a model consisting of a finite number, $\ell$, of species and a finite number, $K$, of reaction channels.  We let  $\vec{M}(t)$ be the $\ell$-dimensional vector whose $i$th component gives the number of molecules of the $i$th species present at time $t$.  For the $k$th reaction channel we denote by $\vec{\nu}_k \in {\mathbb Z}^\ell_{\ge 0}$  the number of molecules of each species needed for the reaction to occur, and by $\vec{\nu}'_k \in {\mathbb Z}_{\ge 0}^\ell$  the number produced.  We let $\mu_k(\vec{x})$ denote the rate, or intensity (termed \textit{propensity} in the biology literature), at which the $k$th reaction occurs when the numbers of molecules present equals the vector  $\vec{x}$.  Then, 
$\vec{M}(t)$ may be represented as the solution to the (vector-valued) equation
\begin{equation}\label{CN}
   \vec{M}(t) = \vec{M}(0) + \sum_{k=1}^K(\vec{\nu}'_k-\vec{\nu}_k) \,{Y}_k\left(\int_0^t \mu_k(\vec{M}(s)){\rm d}s\right), 
\end{equation}
where the stochastic processes $Y_k(\cdot)$ are independent unit-rate Poisson processes \cite{ANDE}.  Note that if,
for some $k^\star\in\{1,\ldots,K\}$,  $\vec{\nu}'_{k^\star}-\vec{\nu}_{k^
\star}$ equals the $i$th unit vector $\vec{e}_i$, then the $k^\star$th reaction channel corresponds to the external arrival of molecules of species $i$.
     For the specific situation that subnetworks operate at disparate timescales, these can be analyzed separately by means of lower dimensional approximations, as pointed out in e.g.\ \cite{KANG}.

%
%
%

\vb
 
 In this paper we study a model of the type described above, for the special case that there is just one type of molecular species (i.e., $\ell=1$), and that there are external arrivals. The distinguishing feature is that the rate of the external input  is determined by an independent continuous-time Markov chain $J(\cdot)$ (commonly referred to as the {\it background process}) defined on the finite state space $\{1,\ldots,d\}$. More concretely, we study a reaction system that obeys the stochastic representation
 \[M(t) = M(0) + Y_1\left(\int_0^t \lambda_{J(s)}{\rm d}s\right) -Y_2\left(\mu\int_0^t M(s){\rm d}s\right),
 \]
 where $Y_1(\cdot)$ and $Y_2(\cdot)$ are independent unit-rate Poisson processes, and $\lambda_{J(s)}$ takes the value $\lambda_i \ge0$ when the background process is in state $i$.  
 Hence, in this model external molecules flow into the system according to a Poisson process with rate $\lambda_i$ when the background process $J(\cdot)$ is in state $i$, while each molecule decays after an exponentially distributed time with mean $\mu^{-1}$ (independently of other molecules present).
 
 The main result of the paper is a functional central limit theorem ({\sc f-clt}) for the process $M(t)$, where we impose a specific scaling on the transition rates $Q=(q_{ij})_{i,j=1}^d$ of the background process $J(t)$, as well as on the external arrival rates $\vec{\lambda}=(\lambda_1,\ldots,\lambda_d)^{\rm T}$ (note that all vectors are to be understood as column vectors). More precisely, the transition rates of the background process are sped up by a factor $N^\alpha$, with $\alpha > 0$, while the arrival rates are sped up linearly, that is, they become $N\lambda_i$. 
 Then we consider the process $U^N(t)$ (with a superscript $N$ to stress the dependence on $N$), obtained from 
$M^N(t)$  by centering, that is, subtracting the mean ${\mathbb E}M^N(t)$, and normalizing, that is, dividing by an appropriate polynomial in $N$.
It is proven that $U^N(t)$  converges (as $N\to\infty$) weakly to a specific Gauss-Markov process, viz.\ an Ornstein-Uhlenbeck ({\sc ou}) process with certain parameters (which are given explicitly  in terms of $\vec{\lambda},\mu$, and the matrix $Q$). Our proofs are based on martingale techniques; more specifically, an important role is played by the martingale central limit theorem.

Interestingly, if $\alpha> 1$ the normalizing polynomial in the {\sc f-clt} is the usual $\sqrt{N}$, but for $\alpha\le 1$ it turns out that we have to divide by $N^{1-\alpha/2}$. The main intuition behind this dichotomy is that for $\alpha>1$ the timescale of the background process is faster than that of the arrival process, and hence the arrival process is effectively a (homogeneous) Poisson process. As a result the corresponding {\sc f-clt} is in terms of the corresponding Poisson rate (which we denote by $\lambda_\infty:=\vec{\pi}^{\rm T}\vec{\lambda}$, where $\vec{\pi}$ is the stationary distribution of the background process) and $\mu$ only. For $\alpha\le 1$, on the contrary, the background process jumps relatively slowly; the limiting {\sc ou} process is in terms of $\vec{\lambda}$ and $\mu$, but features the deviation matrix  \cite{COOL} associated to the background process $J(\cdot)$ as well.
 
 \vb
 
In earlier works \cite{BMT, KELL} we studied a similar setting. However, where we use a martingale-based approach in the present paper, in \cite{BMT, KELL} we relied on another technique: (i)~we set up a system of differential equations for the Laplace transform of $M^N(t)$ jointly with the state of the background process $J^N(t)$, (ii)~modified these into a system of differential equations for the transform of the (centered and normalized) process $U^N(t)$ jointly with $J^N(t)$, (iii)~approximated these by using Taylor expansions, and (iv)~then derived  an ordinary differential equation for the limit of the transform of $U^N(t)$ (as $N\to\infty$). This differential equation defining a Normal distribution, the {\sc clt} was established.
 Importantly, the results
 derived in \cite{BMT,KELL} crucially differ from the ones in the present paper. The most significant difference is that
 those results are {\it no} {\sc f-clt}:  just the {\it finite-dimensional convergence} to the {\sc ou} process was established, rather than convergence at the process level (i.e., to prove weak convergence an additional `tightness argument' would be needed). 
 For the sake of completeness, we mention that  \cite{KELL} covers just the case $\alpha>1$, that is, the regime in which the arrival process is effectively Poissonian, while \cite{BMT} allows all $\alpha>0$.
 
  Our previous works \cite{BMT,KELL,BLOM} have been presented in the language of {\it queueing theory}; the model described above can be seen as an infinite-server queue with  Markov-modulated input.  In comparison to  Markov-modulated single-server queues (and to a lesser extent 
 Markov-modulated many-server queues), this infinite-server model has been much less intensively studied.
 This is potentially due to the fact that the presence of infinitely many servers may be considered less realistic, perhaps rightfully so in the context of operational research, the major application field of queueing theory. In the context of chemical reactions, however, 
 it can be argued that the concept of infinitely many servers is quite natural: each molecule brings its own `decay'-server. 
 
 \vb

We conclude this introduction with a few short remarks on the relation of our work with existing literature. Incorporating Markov modulation in the external arrival rate the infinite-server queue becomes, from a biological perspective, a more realistic model \cite{BRUG}. It is noted that deterministic modulation has been studied in  \cite{EICK} for various types of non-homogeneous arrival rate functions  (M$_t$/G/$\infty$ queue). For earlier results on the stationary distribution of Markov-modulated infinite-server queues, for instance in terms of a recursive scheme that determines the moments, we refer to e.g.\ \cite{DAURIA,FRALIX,KEILSON,OCINNEIDE}. 

As mentioned above, at the methodological level, our work heavily relies on the so-called martingale central limit theorem ({\sc m-clt}), see for instance \cite{ETHI,WHITT}. It is noted that convergence to {\sc ou} has been established in the non-modulated setting before: an appropriately scaled M/M/$\infty$ queue weakly converges to an {\sc ou} process. For a proof, see 
e.g.\ \cite[Section 6.6]{ROBE}; cf.\ also \cite{BOR,IGL}. 
 
 \vb

The rest of this paper is organized as follows. Section \ref{sec:model} sets up the model, its properties and quantities of interest, and presents the essential mathematical tools. The $N^\alpha$-scaled background process is thoroughly investigated in Section \ref{sec:procX}; most notably we derive its {\sc f-clt} relying on the {\sc m-clt}. This takes us to Section \ref{sec:procM}, where we first show that $\bar{M}^N(t) := N^{-1}M^N(t)$ converges to a deterministic solution, denoted by  ${\varrho}(t)$, to finally establish the {\sc f-clt} for $M^N(t)$ by proving asymptotic normality of the process $N^\beta\,(\bar{M}^N(t)-\varrho(t))$, with $\beta\in(0,\sfrac{1}{2})$ appropriately chosen. As indicated earlier, the parameters specifying the limiting {\sc ou} process depend on which speedup is `faster': the one corresponding to  the background process (i.e., $\alpha>1$) or that of the arrival rates (i.e., $\alpha<1$). We conclude this paper by a set of numerical experiments, that illustrate the impact 
of the value of $\alpha.$

\section{The model and mathematical tools}\label{sec:model}
In this section we first describe our model in detail, and then present preliminaries (viz.\  a version of the law of large numbers for Poisson processes and the {\sc m-clt}).

\vb 

{\it Model.} Our paper considers the following Markovian model. Let $J(t)$ be an irreducible continuous-time Markov process on the finite state space $\{1,\ldots,d\}$. Define its generator matrix by $Q=(q_{ij})_{i,j=1}^d$ and the (necessarily unique) invariant distribution by $\vec{\pi}$; as a consequence, $\vec{\pi}^{\rm T}Q=\vec{0}^{\rm T}$. 
Let $X_i(t)$ be the indicator function of the event $\{J(t)=i\}$, for $i=1,\ldots,d$; in other words: $X_i(t)=1$ if $J(t)=i$ and $0$ otherwise. It is assumed that $J(\cdot)$ is in stationarity at time $0$ and hence at any $t$; we thus have ${\mathbb P}(J(t)=i)=\pi_i$.
As commonly done in the literature, the transient distribution ${\mathbb P}(J(s)=j\,|\,J(0)=i)$ is denoted by $p_{ij}(s)$ and is computed as $(e^{Qs})_{i,j}$.

The model considered in this paper is  a so-called {\it Markov-modulated infinite-server queue}. Its dynamics can be described as follows.
For any time $t \ge 0$, molecules arrive according to a Poisson process with rate $\lambda_i$ if $X_i(t)=1$. We let the service/decay rate of each molecule be  $\mu$ irrespective of the state of the background process. There are infinitely many servers so that the molecules' sojourn times are their service times; the molecules go in service immediately upon arrival. Throughout this paper, $M(t)$ denotes the number of molecules present at time $t$.

\vb

{\it Scaling.} In this paper a {\sc f-clt} under the  following scaling is investigated. The background process as well as the arrival process are sped up, while the service-time distribution remains unaffected. More specifically, the transition matrix of the background process becomes $N^\alpha Q$ for some $\alpha>0$, while the arrival rates, $\lambda_i$ for $i=1,\ldots,d$, are scaled linearly (i.e., become $N\lambda_i$); then $N$ is sent to $\infty$. To indicate the fact that they depend on the scaling parameter $N$, we write in the sequel $J^N(t)$ for the background process, $X_i^N(t)$ for the indicator function associated with state $i$ of the background process at time $t$, and $M^N(t)$ for the number of molecules in the system at time $t$. Later in the paper we let the transitions of the background process go from being sublinear (i.e., $\alpha<1$) to superlinear (i.e., $\alpha>1$); one of our main findings is that there is a {\it dichotomy}, in the sense that there is crucially different behavior in 
these two regimes, with a special situation at the boundary, i.e., $\alpha=1$.

\vb

The above model can be put in terms of a chemical reaction network, as formulated in the introduction. It turns out to be convenient to  do so by interpreting the background process as a model for a single molecule transitioning between $d$ different states, with $X_i^N(t)$ denoting the number of molecules in state $i$ at time $t$.  Since there is at most one such molecule, we see $X_i^N(t) \in\{0,1\}$.
The following table informally summarizes the relevant reactions  and corresponding intensity functions for the model of interest:\vb
\begin{align*}
	\begin{array}{|c|c|c|}\hline\hline
	\text{Reaction} & \:\:\:\text{Intensity function} \:\:\:&\text{Description}\\\hline
 	X_i \to X_j \:\:\mbox{(for $i\not=j$)}& N^{\alpha} q_{ij} X_i^N(t)&\text{$J(\cdot)$ jumps from $i$ to $j$}\\\hline
	\emptyset \to M & \sum_{i=1}^d N\lambda_i X_i^N(t)&\text{Arrival}\\\hline
	M \to \emptyset & \mu M^N(t)&\text{Departure}\\\hline\hline
	\end{array}
\end{align*} 
As mentioned above, it is assumed that $\alpha >0$; in addition,
$q_{ij}\ge 0$ for $i\not=j$ (while $Q\vec{1}=\vec{0}$), $\lambda_i\ge 0$, and $\mu>0$.
The dynamics can be phrased in terms of  the stochastic representation framework, as described in the introduction.
In the first place, the evolution of the indicator functions can be represented as
\begin{equation}
	X_i^N(t) \hmm= \hmm X_i^N(0) - \sum_{\substack{j=1\\j\neq i}}^dY_{i,j}\left( N^\alpha q_{ij}\int_0^t X_i^N(s) \DD s\right)+\sum_{\substack{j=1\\j\neq i}}^d Y_{j,i}\left( N^\alpha q_{ji}\int_0^t X_j^N(s) \DD s\right) \label{eqX1}
\end{equation}
	where the $Y_{i,j}$ ($i,j=1,\ldots,d$ with $i\not=j$) are independent unit-rate Poisson processes.
	It is readily verified that if the $X_i^N(0)$, with $i=1,\ldots,d$, are indicator functions summing up to $1$,
	then so are the $X_i^N(t)$ for any $t\ge 0.$ The second (third, respectively) term in the right-hand side represents the number of times that $J^N(\cdot)$ leaves (enters) state $i$ in $[0,t]$.
	
	In the second place, the number of molecules in the system evolves as
\begin{equation}	M^N(t)\hmm = \hmm M^N(0) + Y_1\left( N \int_0^t \sum_{i=1}^d \lambda_i X_i^N(s) \DD s\right) - Y_2\left( \mu \int_0^t M^N(s) \DD s\right),\label{eqM}
\end{equation}
where $Y_1,$ and $Y_2$ are independent unit-rate Poisson processes (also independent of the $Y_{i,j}$).

The objective of this paper is to describe the limiting behavior of the system as $N\to \infty$, for different values of $\alpha$. Our main result is a {\sc f-clt} for the process $M^N(\cdot)$; to establish this, we also need a {\sc f-clt} for 
the {\it state frequencies} of $J^N(\cdot)$ on $[0,t]$, defined as
 \[\vec{Z}^N(t)=(Z^N_1(t),\ldots, Z_d^N(t))^{\rm T},\quad\mbox{with}\quad
 Z^N_i(t):=\int_0^t X^N_i(s)\DD s.\] 
 
  \vb
  
 This paper essentially makes use of two more or less standard `tools' from probability theory: the  law of large numbers applied to Poisson processes and the martingale central limit theorem ({\sc m-clt}).  For the sake of completeness, we state the versions used here.
 
 \begin{lemma}\label{LLN} {\rm {\cite[Thm.\ 2.2]{ANDE}}}
 	Let $Y$ be a unit rate Poisson process.  Then for any $U > 0$,
	\[
		\lim_{N\to \infty} \sup_{0\le u\le U} \left| \frac{Y(Nu)}{N} - u\right| = 0,
	\]
	almost surely.
 \end{lemma}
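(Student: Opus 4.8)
The plan is to upgrade the classical (pointwise) strong law of large numbers for Poisson processes to the claimed uniform statement by exploiting the monotonicity of the sample paths of $Y$. The key observation is that $u\mapsto Y(Nu)/N$ is nondecreasing, while the candidate limit $u\mapsto u$ is continuous and strictly increasing; a P\'olya-type argument then converts pointwise convergence on a finite grid into uniform convergence on the compact interval $[0,U]$, with essentially no analytic effort.

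First I would establish the pointwise building block: $Y(t)/t\to 1$ almost surely as $t\to\infty$. Writing $T_n$ for the $n$th jump time of $Y$, the interarrival times are independent unit-mean exponentials, so the ordinary strong law gives $T_n/n\to 1$ almost surely. Since $T_{Y(t)}\le t< T_{Y(t)+1}$ and $Y(t)\to\infty$ almost surely, sandwiching $t/Y(t)$ between $T_{Y(t)}/Y(t)$ and $T_{Y(t)+1}/Y(t)$ yields $Y(t)/t\to 1$ almost surely. Consequently, for each fixed $u>0$ we have $Y(Nu)/N = u\cdot Y(Nu)/(Nu)\to u$ almost surely, and trivially $Y(0)/N=0$.

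Next I would fix $\varepsilon>0$ and a grid $0=u_0<u_1<\cdots<u_m=U$ of mesh at most $\varepsilon$. Because the grid is finite, there is an almost sure event on which $Y(Nu_k)/N\to u_k$ holds simultaneously for every $k$. For $u\in[u_{k-1},u_k]$ the monotonicity of $Y$ gives $Y(Nu_{k-1})\le Y(Nu)\le Y(Nu_k)$, whence
\[
\frac{Y(Nu_{k-1})}{N}-u_k \;\le\; \frac{Y(Nu)}{N}-u \;\le\; \frac{Y(Nu_k)}{N}-u_{k-1}.
\]
Letting $N\to\infty$, the outer bounds converge to $u_{k-1}-u_k\ge-\varepsilon$ and $u_k-u_{k-1}\le\varepsilon$ respectively, so on this event $\limsup_{N\to\infty}\sup_{0\le u\le U}\left|Y(Nu)/N-u\right|\le\varepsilon$. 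Intersecting the almost sure events corresponding to the countable family of meshes $\varepsilon=1/n$ then forces the $\limsup$ to vanish, which is the assertion.

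The only point requiring care is the uniformity over the continuum of values $u\in[0,U]$, since almost sure convergence is available a priori only for each fixed $u$ (hence for at most countably many $u$ simultaneously). Monotonicity of the paths together with continuity of the limit is precisely what closes this gap, so once the pointwise law is in hand the remainder of the argument is routine.
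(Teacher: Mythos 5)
Your proof is correct and complete. A point of comparison is somewhat moot here: the paper does not prove this lemma at all --- it is imported verbatim from the cited reference \cite[Thm.~2.2]{ANDE} and used as a black box, so there is no internal argument to measure yours against. Your route is the standard elementary one for results of this kind: the scalar strong law applied to the i.i.d.\ unit-mean exponential interarrival times gives $T_n/n\to1$ a.s., the sandwich $T_{Y(t)}\le t<T_{Y(t)+1}$ converts this to $Y(t)/t\to1$ a.s., and then the P\'olya/Glivenko--Cantelli device (finite grids of mesh $\varepsilon=1/n$, monotonicity of $Y$, monotonicity and continuity of the identity as limit, countable intersection of almost sure events) upgrades pointwise to uniform convergence on $[0,U]$. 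All three steps are carried out correctly, including the one place where care is genuinely needed --- the passage from a countable family of pointwise limits to a supremum over the continuum, which is exactly what the monotonicity bounds $Y(Nu_{k-1})\le Y(Nu)\le Y(Nu_k)$ close. What your write-up buys, relative to the paper, is self-containedness: the lemma is reduced to the ordinary strong law of large numbers, so a reader need not consult the external reference to verify the tool on which both Proposition~\ref{prop:zNormal} and Theorem~\ref{ST} ultimately rest.
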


The following is known as (a version of) the {\sc m-clt}, and is a corollary to Thm.\ 7.1.4 and the proof of Thm.\ 7.1.1 in \cite{ETHI}. Here and in the sequel, `$ \Rightarrow$' denotes weak convergence; in addition, $[\cdot,\cdot]_t$ is the quadratic covariation process.
\begin{theorem}\label{MCLT}
	Let $\{\vec{\mathcal{M}}^N\}$, for $N\in{\mathbb N}$, be a sequence of $\R^d$-valued martingales with $\vec{\mathcal{M}}^N(0) =\vec{0}$ for any $N\in{\mathbb N}$. Suppose
	\[
		\lim_{N\to \infty} \E \left[ \sup_{s\le t} \left|\vec{\mathcal{M}}^N(s) - \vec{\mathcal{M}}^N(s-)\right| \right] = \vec{0},
\text{ with } \vec{\mathcal{M}}^N(s-) := \lim_{u \uparrow s}\vec{\mathcal{M}}^N(u),
	\]
	and, as $N\to\infty$,
	\[
		[\mathcal{M}^N_i,\mathcal{M}^N_j]_t \to C_{ij}(t)
	\]
	for a deterministic matrix $C_{ij}(t)$ that is continuous in $t$, for $i,j=1,\ldots,d$ and for all $t>0$.
	Then $\vec{\mathcal{M}}^N \Rightarrow \vec{W}$, where $\vec{W}$ denotes a Gaussian process with independent increments and $\E\left[\vec{W}(t)\vec{W}(t)^{\rm T}\right] = C(t)$ $($such that $\E [W_i(t)\,W_j(t)] = C_{ij}(t)$$)$.
\end{theorem}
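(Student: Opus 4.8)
The plan is to deduce the statement from the general martingale-convergence machinery of \cite{ETHI}, by combining a tightness argument with an identification of the subsequential limits through their quadratic covariation. First I would show that the sequence $\{\vec{\mathcal{M}}^N\}$ is relatively compact in the Skorokhod space $D([0,\infty),\R^d)$. The natural route is the quadratic-variation criterion for tightness: since each $[\mathcal{M}^N_i,\mathcal{M}^N_j]_t$ converges to a function $C_{ij}(t)$ that is deterministic and continuous in $t$, the oscillations of $\vec{\mathcal{M}}^N$ are asymptotically controlled on compact time intervals, and together with the hypothesis that the expected maximal jump $\E[\sup_{s\le t}|\vec{\mathcal{M}}^N(s)-\vec{\mathcal{M}}^N(s-)|]$ tends to $0$ this yields the required bounds on the Skorokhod modulus of continuity. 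This is exactly the content of the relevant part of \cite[Thm.\ 7.1.4]{ETHI}.

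Next I would identify an arbitrary subsequential weak limit $\vec{W}$. The vanishing-jump hypothesis forces $\vec{W}$ to have continuous sample paths almost surely, since the maximal jump converges to $0$ in $L^1$ and therefore cannot survive in the limit. The $L^2$-control furnished by the convergence of the quadratic variations supplies the uniform integrability needed to pass the martingale property to the limit, so that $\vec{W}$ is again a martingale with $\vec{W}(0)=\vec{0}$. The more delicate point, and the step I expect to be the main obstacle, is to show that the quadratic covariation of the limit is \emph{exactly} the prescribed matrix, i.e.\ $[W_i,W_j]_t=C_{ij}(t)$. This is not automatic, because quadratic variation is not a continuous functional on Skorokhod space in general; here the vanishing-jump condition is essential a second time, as it prevents any loss of mass through jumps and permits the joint convergence of $([\mathcal{M}^N_i,\mathcal{M}^N_j]_t,\vec{\mathcal{M}}^N)$ to $(C_{ij}(t),\vec{W})$.

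Once $\vec{W}$ is known to be a continuous $\R^d$-valued martingale with $\vec{W}(0)=\vec{0}$ and deterministic continuous quadratic covariation $[W_i,W_j]_t=C_{ij}(t)$, the Gaussian conclusion follows from the martingale characterization of Gaussian processes (a multivariate L\'evy-type argument). Concretely, for each fixed $\vec{\theta}\in\R^d$ I would verify, via It\^o's formula and the identity $\DD[W_i,W_j]_t=\DD C_{ij}(t)$, that $\exp\big(\mathrm{i}\,\vec{\theta}^{\rm T}\vec{W}(t)+\tfrac12\vec{\theta}^{\rm T}C(t)\vec{\theta}\big)$ is a (complex, bounded-on-compacts) martingale. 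This pins down the conditional characteristic function of the increments as $\exp\big(-\tfrac12\vec{\theta}^{\rm T}(C(t)-C(s))\vec{\theta}\big)$, showing that $\vec{W}$ has independent Gaussian increments with $\E[\vec{W}(t)\vec{W}(t)^{\rm T}]=C(t)$. Since this characterization determines the law of $\vec{W}$ uniquely, every subsequential limit coincides, and hence the full sequence converges: $\vec{\mathcal{M}}^N\Rightarrow\vec{W}$. The technical heart of the argument is thus entirely in the tightness estimate and the correct passage of the covariation to the limit, both of which rest on a quantitative use of the vanishing-jump hypothesis; the Gaussian identification is a clean consequence of L\'evy's characterization once continuity and the deterministic quadratic variation are secured.
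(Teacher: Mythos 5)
This theorem is not proved in the paper at all: it is imported verbatim as a known result, stated as ``a corollary to Thm.\ 7.1.4 and the proof of Thm.\ 7.1.1 in [ETHI]'' (Ethier and Kurtz). So there is no internal proof to compare against; the relevant comparison is with the standard argument behind that citation, and your outline is essentially a faithful reconstruction of it: relative compactness in $D([0,\infty),\R^d)$ from the convergence of the quadratic covariations plus the vanishing-jump condition, continuity of subsequential limits, identification of the limit's covariation, and then the L\'evy-type exponential-martingale characterization $\exp\bigl(\mathrm{i}\,\vec{\theta}^{\rm T}\vec{W}(t)+\tfrac12\vec{\theta}^{\rm T}C(t)\vec{\theta}\bigr)$ pinning down independent Gaussian increments, with uniqueness of the limit law upgrading subsequential to full weak convergence. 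This is the right architecture, and the two places you single out as delicate (tightness and passage of the covariation to the limit) are indeed where the real work sits in Ethier--Kurtz and in Jacod--Shiryaev.

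One step in your sketch is stated too optimistically: the claim that ``the $L^2$-control furnished by the convergence of the quadratic variations supplies the uniform integrability'' needed to pass the (local) martingale property to the limit. Convergence of $[\mathcal{M}^N_i,\mathcal{M}^N_j]_t$ to a deterministic limit (in probability, which is all the hypothesis gives) does \emph{not} by itself bound $\E\bigl[[\mathcal{M}^N_i]_t\bigr]=\E\bigl[(\mathcal{M}^N_i(t))^2\bigr]$ uniformly in $N$, so no $L^2$ bound or uniform integrability follows directly. The standard repair is localization: stop $\vec{\mathcal{M}}^N$ at $\tau_N:=\inf\{s:\ \mathrm{tr}\,[\vec{\mathcal{M}}^N]_s\ge \mathrm{tr}\,C(t)+1\}$; the stopped martingales are genuinely $L^2$-bounded (using the jump bound to control the overshoot), $\P(\tau_N<t)\to0$ by the hypothesis, and one then works throughout with the stopped sequence. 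With that amendment, and noting that for the L\'evy step it suffices that the limit be a continuous \emph{local} martingale, your argument goes through and matches the cited proof.
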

 
There is an extensive body of literature on the {\sc m-clt};  for more background, see e.g.\ \cite{JS,ROB,WHITT}.

 \section{A functional clt for the state frequencies}\label{sec:procX}
In this section we establish the {\sc f-clt} for the integrated background processes $\vec{Z}^N(t)$, that is, the state frequencies of the Markov process $J^N(\cdot)$ on $[0,t]$. This {\sc f-clt} is a crucial element in the proof of the {\sc f-clt}
of $M^N(t)$, as will be given in the next section. 
It is noted that there are several ways to establish this {\sc f-clt}; we refer for instance to the related weak convergence results in \cite{BHAT,KURTZ}, as well as the nice, compact proof for the single-dimensional convergence in \cite[Ch. II, Thm. 4.11]{ASMU}. We chose to include our own derivation, as it is straightforward, insightful and self-contained, while at the same time it also introduces a number of concepts and techniques that are used in the 
{\sc m-clt}-based proof of the {\sc f-clt}
for $M^N(t)$ in the next section.

We first identify the corresponding law of large numbers. To this end, we consider the process $\vec{X}^N(t)$ by dividing both sides of \eqref{eqX1} by $N^\alpha$ and letting $N\to \infty$.  Since $X_i^N(t)\in\{0,1\}$ for all $t$, the $X_i^N(t)$ and $X_i^N(0)$ terms both go to zero as $N\to\infty$. Thus we may apply Lemma \ref{LLN} to see that also, as $N\to\infty$,
 \begin{align*}
 	 - \sum_{j\neq i} q_{ij}Z_i^N(t)+ \sum_{j\neq i}q_{ji}Z_j^N(t) \to 0,
 \end{align*}
 almost surely, or $\lim_{N\to\infty}{\vec{Z}}^N(t)^{\rm T}\,Q=\vec{0}^{\rm T}$.
Bearing in mind that $\vec{1}^{\rm T}\vec{Z}^N(t)=t$, the limit of $\vec{Z}^N(t)$ solves the global balance equations, entailing that
\begin{equation}\label{eq:limitXint}
Z_j^N(t) \to \pi_j t
\end{equation}
 almost surely as $N\to\infty$, where we recall that $\vec{\pi}$ is the stationary distribution associated with the background process $J(\cdot)$.\\
 
\vb

As mentioned above, the primary objective of this section is to establish  a {\sc f-clt} for $\vec{Z}^N(\cdot)$ as $N\to\infty$. More specifically, we wish to identify a covariance matrix ${C}$ such that, as $N\to\infty$,
\begin{equation}\label{doel}
  N^{\alpha/2}\left(\vec{Z}^N(t) -\vec{\pi}t\right) \Rightarrow \vec{W}(t),
 \end{equation}
with $\vec{W}(\cdot)$ representing a ($d$-dimensional) Gaussian process with independent increments 
such that $\E\left[\vec{W}(t)\vec{W}(t)^{\rm T}\right] = C\,t$. In other words: our goal is to show weak convergence to a $d$-dimensional Brownian motion (with dependent components).

We start our exposition by identifying a candidate covariance matrix ${C}$, by studying the asymptotic behavior (that is, as $N\to\infty$) of $\COV(Z_i^N(t),Z_j^N(t))$ for fixed $t$. Bearing in mind that $Z^N_i(t)$ is the integral over $s$ of $X^N_i(s)$, and using standard properties of the covariance, this covariance can be rewritten as
\[\int_0^t \int_0^s \COV\left(X_i^N(r),X_j^N(s)\right)\,\DD r\DD s + 		
		\int_0^t \int_s^t \COV\left(X_i^N(r),X_j^N(s)\right)\,\DD r\DD s.\]
		Recalling that the process $J^N(\cdot)$ starts off in equilibrium at time $0$, and that $X_i(s)$ is the indicator function of the event $\{J^N(s)=i\}$, this expression can be rewritten as
	\[\int_0^t \int_0^s \left(\pi_i p_{ij}^N(s-r) - \pi_i \pi_j\right) \,\DD r\DD s+ 				\int_0^t \int_s^t \left(\pi_j p_{ji}^N(r-s) - \pi_i \pi_j\right) \,\DD r\DD s,\]
	where we use the notation $p_{ij}^N(s):={\mathbb P}(J^N(s)=j\,|\,J^N(0)=i).$	
Performing the change of variable $u:=rN^{\alpha}$ we thus find that
\newcommand{\hmmm}{\hspace{-0.5mm}}
\[N^\alpha \COV(Z_i^N(t),Z_j^N(t))=
 \pi_i \int_0^t \hmmm\int_0^{sN^{\alpha}} \hmmm\hmmm\left(p_{ij}(u) - \pi_j\right) \DD u\DD s + \pi_j \int_0^t \hmmm\int_0^{(t-s)N^{\alpha}} \hmmm\hmmm\left(p_{ji}(u) - \pi_i\right) \DD u\DD s.\]
 A crucial role in the analysis is played by  the {\it deviation matrix} $D=(D_{ij})_{i,j=1}^d$ associated with the finite-state Markov process $J(\cdot)$; it is defined by
 \begin{equation}
 D_{ij}:=\int_0^\infty (p_{ij}(t)-\pi_j)\DD t;\label{eq:deviation}
 \end{equation}
 see e.g.\  \cite{COOL} for background and a survey of the main results on deviation matrices. Combining the above, we conclude that, as $N\to\infty$, with $C_{ij}:= \pi_iD_{ij}+\pi_j D_{ji}$ we have identified that candidate covariance matrix, in the sense that we have shown that, for given $t$,
\[N^\alpha \,\COV(Z_i^N(t),Z_j^N(t)) \to C_{ij}t\] as $N\to\infty.$ 
The objective of the remainder of this section is to establish the weak convergence (\ref{doel}) with the covariance matrix $C=(C_{ij})_{i,j=1}^d$.

 \vb
We now prove this weak convergence relying on the {\sc m-clt}. We start by considering linear combinations
of the $X^N_i(t)$ processes based on Eqn.~\eqref{eqX1} and introduce $\tilde{X}_i^N(t):= X_i^N(t) - X_i^N(0)$ for notational convenience. For any real constants $f_i$, $i=1,\ldots,d$,
we have that
\begin{align}
    \sum_{i=1}^d f_i \tilde{X}^N_i(t) & = \sum_{i=1}^d\sum_{j\neq i} f_i \bigg(Y_{ji}\left(N^\alpha Z^N_j(t) q_{ji}\right) -Y_{ij}\left(N^\alpha Z^N_i(t) q_{ij}\right) \bigg) \nonumber \\
  & = \sum_{i=1}^d \sum_{j=1}^d (f_j - f_i) Y_{ij}\left(N^\alpha Z^N_i(t) q_{ij}\right), \label{eq:fX}
\end{align}
where we do not need to define the processes $Y_{ii}$ as the terms containing them are zero anyway. Note that the quadratic variation of this linear combination is equal to 
\begin{equation}
    \left[\sum_{i=1}^d f_i \tilde{X}^N_i\right]_t = \sum_{i=1}^d \sum_{j=1}^d (f_j - f_i)^2 Y_{ij}(N^\alpha Z^N_i(t) q_{ij}), \label{eq:quad-var}
\end{equation}
as the quadratic variation of a Poisson process is equal to itself.
Due to Lemma \ref{LLN} and Eqn.~\eqref{eq:limitXint}, we have for $N\to\infty$,
\begin{equation}
    \left[N^{-\alpha/2}\sum_{i=1}^d f_i \tilde{X}^N_i\right]_t \to t \sum_{i=1}^d \sum_{j=1}^d (f_j - f_i)^2 \pi_i q_{ij}.
\label{eq:quad-var-2}
\end{equation}

The crucial step in proving the weak convergence and consequently applying the {\sc m-clt} is the identification of a suitable martingale. 
We prove the following lemma.
\begin{lemma} Let $D$ denote the deviation matrix of the background Markov chain $J(t)$. $\vec V^N(t) := N^{-\alpha/2} \vec{\tilde{X}}^N(t)^T D + N^{\alpha/2}\left( \vec Z^N(t) - \boldsymbol\pi t\right)$ is
an $\mathbb R^d$-valued martingale.
\end{lemma}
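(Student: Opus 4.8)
The plan is to exhibit $\vec V^N$ as a fixed linear image of a genuine martingale, the cancellation being engineered precisely by the defining identity of the deviation matrix. First I would pass from the Poisson representation \eqref{eqX1} to its compensated form. Since each $Y_{ij}$ is a unit-rate Poisson process and $N^\alpha q_{ij}X_i^N(s)$ is its adapted stochastic intensity, the process
\[
\mathcal N_{ij}^N(t) := Y_{ij}\!\left(N^\alpha q_{ij}Z_i^N(t)\right) - N^\alpha q_{ij}Z_i^N(t)
\]
is a martingale for the natural filtration. Substituting $Y_{ij}(N^\alpha q_{ij}Z_i^N(t)) = \mathcal N_{ij}^N(t) + N^\alpha q_{ij}Z_i^N(t)$ into \eqref{eqX1}, collecting compensators, and using $\sum_{j\neq i}q_{ij} = -q_{ii}$ (rows of $Q$ sum to zero), I obtain the decomposition
\[
\vec{\tilde X}^N(t) = \bar{\vec{\mathcal M}}^N(t) + N^\alpha Q^{\rm T}\vec Z^N(t),
\]
where $\bar{\vec{\mathcal M}}^N$ is the $\R^d$-valued martingale whose $i$th component is $-\sum_{j\neq i}\mathcal N_{ij}^N + \sum_{j\neq i}\mathcal N_{ji}^N$.

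Next I would invoke the two structural facts that make the drift vanish. The first is the fundamental relation $QD = \Pi - I$, with $\Pi := \vec 1\,\vec\pi^{\rm T}$: writing the transition matrix as $P(s) = (p_{ij}(s)) = e^{Qs}$ and differentiating inside \eqref{eq:deviation}, one has $Q\int_0^\infty(P(s)-\Pi)\,{\rm d}s = \int_0^\infty \tfrac{{\rm d}}{{\rm d}s}P(s)\,{\rm d}s = \Pi - I$, using $Q\Pi = 0$ and $P(s)\to\Pi$. The second is the normalization $\vec 1^{\rm T}\vec Z^N(t) = t$ (the indicators sum to one), whence $\vec Z^N(t)^{\rm T}\Pi = t\,\vec\pi^{\rm T}$. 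Transposing the decomposition and right-multiplying by $D$ then yields
\[
N^{-\alpha/2}\vec{\tilde X}^N(t)^{\rm T} D = N^{-\alpha/2}\bar{\vec{\mathcal M}}^N(t)^{\rm T} D + N^{\alpha/2}\big(t\,\vec\pi^{\rm T} - \vec Z^N(t)^{\rm T}\big),
\]
where I used $\vec Z^N(t)^{\rm T}QD = \vec Z^N(t)^{\rm T}(\Pi - I) = t\,\vec\pi^{\rm T} - \vec Z^N(t)^{\rm T}$.

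Finally I would add the second term $N^{\alpha/2}(\vec Z^N(t) - \vec\pi t)$ from the definition of $\vec V^N$: the two $N^{\alpha/2}$-scaled pieces are exact negatives and cancel identically, leaving
\[
\vec V^N(t) = N^{-\alpha/2}\,D^{\rm T}\bar{\vec{\mathcal M}}^N(t),
\]
a constant-matrix image of the martingale $\bar{\vec{\mathcal M}}^N$, hence itself an $\R^d$-valued martingale. The main obstacle is not the algebra, which is forced once the deviation-matrix identity is in hand, but the probabilistic justification of the first step: one must verify that the compensated time-changed Poisson processes $\mathcal N_{ij}^N$ are martingales for the filtration generated by the $Y_{ij}$, i.e.\ that the random time-changes $N^\alpha q_{ij}Z_i^N(t)$ are adapted and that the optional-sampling argument applies. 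Once that is secured, the martingale property of $\vec V^N$ follows immediately from its representation above.
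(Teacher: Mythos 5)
Your proof is correct and is essentially the paper's own argument written in matrix form: compensating the time-changed Poisson processes, decomposing $\vec{\tilde{X}}^N(t) = \bar{\vec{\mathcal{M}}}^N(t) + N^\alpha Q^{\rm T}\vec{Z}^N(t)$, and right-multiplying by $D$ reproduces coordinate-wise the paper's identity $V^N_k(t) = N^{-\alpha/2}\sum_{i,j}(D_{jk}-D_{ik})\,\tilde{Y}_{ij}\left(N^\alpha Z^N_i(t)q_{ij}\right)$, with the same two ingredients ($QD=\Pi-I$ and $\vec{1}^{\rm T}\vec{Z}^N(t)=t$) producing the cancellation. The only substantive addition is your explicit flagging of the adaptedness/filtration issue for the compensated time-changed Poisson processes, a point the paper passes over with the remark that centered Poisson processes are martingales and linear combinations preserve the martingale property.
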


\begin{proof}
    We center our unit-rate Poisson processes by introducing $\tilde Y_{i,j}(u) := Y_{i,j}(u) - u$. 
    The following algebraic manipulations are easily verified:
\begin{align*}
 & N^{-\alpha/2} \sum_{i=1}^d \sum_{j=1}^d (D_{jk} - D_{ik}) \tilde Y_{ij}\left(N^\alpha Z^N_i(t) q_{ij}\right)  \\
 & = N^{-\alpha/2} \sum_{i=1}^d \tilde{X}_i^N(t)D_{ik} - N^{\alpha/2} \sum_{i=1}^d \sum_{j=1}^d (D_{jk} - D_{ik}) Z^N_i(t) q_{ij} \\
 & =  N^{-\alpha/2}\left(\vec{\tilde{X}}^N(t)^T D\right)_k - N^{\alpha/2}\sum_{i=1}^d \sum_{j=1}^d Z^N_i(t) q_{ij} D_{jk} \\
 & =  N^{-\alpha/2}\left(\vec{\tilde{X}}^N(t)^T D\right)_k + N^{\alpha/2}(Z^N_k(t) - \pi_k t),
\end{align*}
where we used Eqn.~\eqref{eq:fX}, the fact that $\sum_{i=1}^d Z_i^N(t) = t$ and the property $QD = \Pi - I$, with $\Pi = \vec{1\pi}^{\rm T}$. As centered Poisson processes are martingales, and linear combinations preserve the martingale property, this concludes
the proof.
\end{proof}

We now wish to apply the {\sc m-clt} to $\vec V^N(t)$ as $N\to\infty$. As the second term is absolutely continuous,
and the first term is a jump process with jump sizes $N^{-\alpha/2}$, we have indeed vanishing jump
sizes as required by the {\sc m-clt}. We now compute the covariations of $\vec V^N(t)$, and note that
as the second term is absolutely continuous and thus does not contribute to the covariation, we have that
\begin{align}
    [V^N_i,V^N_j]_t & = N^{-\alpha} [((\vec{\tilde{X}}^N)^TD)_i, ((\vec{\tilde{X}}^N)^TD)_j]_t \nonumber\\
  & = \frac12  N^{-\alpha} \left( [((\vec{\tilde{X}}^N)^TD)_i + ((\vec{\tilde{X}}^N)^TD)_j]_t - [((\vec{\tilde{X}}^N)^TD)_i]_t - [((\vec{\tilde{X}}^N)^TD)_j]_t \right) \nonumber\\
  & = \frac12  N^{-\alpha} \left( \left[\sum_{k=1}^d \tilde{X}_k^N (D_{ki} + D_{kj})\right]_t -\left[\sum_{k=1}^d \tilde{X}_k^N D_{ki}\right]_t - \left[\sum_{k=1}^d \tilde{X}_k^N D_{kj}\right]_t \right),
\end{align}
where we have used the polarization identity $2[X,Y]_t = [X+Y]_t - [X]_t - [Y]_t$.

Using Eqn.~\eqref{eq:quad-var-2}, this converges to
\begin{align}
    [V^N_i,V^N_j]_t & \to \frac12 t \sum_{k=1}^d \sum_{\ell=1}^d \pi_k q_{k\ell} \bigg( (D_{ki} + D_{kj} -D_{\ell i} - D_{\ell j})^2  - (D_{ki} - D_{\ell i})^2 - (D_{kj} - D_{\ell j})^2\bigg)\nonumber\\
& = t \sum_{k=1}^d \sum_{\ell=1}^d \pi_k q_{k\ell} (D_{ki} - D_{\ell i})(D_{kj} - D_{\ell j}) \nonumber\\
& = t (\pi_j D_{ji} + \pi_i D_{ij})
\end{align}
where we used the properties $Q\vec{1}=0$, $\Pi Q=0$, $QD = \Pi-I$ and $\Pi D = 0$.

Thus, from the {\sc m-clt} we have that $\vec V^N(t)$ converges weakly to $d$-dimensional Brownian motion with covariance matrix $C:=
D^{\rm T}{\rm diag}\{\vec{\pi}\}+{\rm diag}\{\vec{\pi}\}D$.
As the first term of $\vec V^N(t)$ vanishes for $N\to\infty$, we have established the desired {\sc f-clt}:
\begin{proposition}\label{prop:zNormal} As $N\to\infty$,\[
 N^{\alpha/2}\left(\vec{Z}^N(t)-\vec{\pi}t\right)  \Rightarrow \vec{W}_{C}(t),
\]
where $\vec{W}_{C}(\cdot)$ is a zero-mean Gaussian process with independent increments and
covariance structure
$\E\left[\vec{W}_{C}(t)\vec{W}_{C}(t)^{\rm T}\right] = C\,t$. 
\end{proposition}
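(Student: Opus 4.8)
The plan is to derive the stated weak convergence by applying the martingale central limit theorem (Theorem \ref{MCLT}) to the $\R^d$-valued martingale $\vec{V}^N$ identified in the lemma above, and then to discard its asymptotically negligible first term. Recall that $\vec{V}^N(t) = N^{-\alpha/2}\vec{\tilde{X}}^N(t)^{\rm T}D + N^{\alpha/2}(\vec{Z}^N(t)-\vec{\pi}t)$ with $\vec{V}^N(0)=\vec{0}$, so the centering demanded by Theorem \ref{MCLT} is already in place.

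First I would verify the two hypotheses of Theorem \ref{MCLT}. For the vanishing-jump condition, note that the second summand of $\vec{V}^N$ is absolutely continuous in $t$ and thus jump-free, while each jump of the first summand is produced by a single firing of some $Y_{ij}$, which sends $\tilde{X}_i^N\mapsto\tilde{X}_i^N-1$ and $\tilde{X}_j^N\mapsto\tilde{X}_j^N+1$; hence the $k$th component of $N^{-\alpha/2}\vec{\tilde{X}}^N(t)^{\rm T}D$ jumps by $N^{-\alpha/2}(D_{jk}-D_{ik})$. As the entries of $D$ are fixed and finite, the supremum over $s\le t$ of the jump size is deterministically bounded by a constant times $N^{-\alpha/2}$, so its expectation tends to $\vec{0}$. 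For the convergence of the quadratic covariation I would rely on the computation already recorded: the polarization identity together with \eqref{eq:quad-var-2} reduces $[V^N_i,V^N_j]_t$ to the limit $t\sum_{k,\ell}\pi_k q_{k\ell}(D_{ki}-D_{\ell i})(D_{kj}-D_{\ell j})$, which collapses to $t(\pi_i D_{ij}+\pi_j D_{ji})=:C_{ij}t$ upon invoking $Q\vec{1}=\vec{0}$, $\Pi Q=0$, $QD=\Pi-I$, and $\Pi D=0$ (with $\Pi=\vec{1}\vec{\pi}^{\rm T}$). This limit is linear, hence continuous, in $t$, so Theorem \ref{MCLT} gives $\vec{V}^N\Rightarrow\vec{W}_{C}$, a zero-mean Gaussian process with independent increments and $\E[\vec{W}_{C}(t)\vec{W}_{C}(t)^{\rm T}]=Ct$, where $C=D^{\rm T}{\rm diag}\{\vec{\pi}\}+{\rm diag}\{\vec{\pi}\}D$.

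It remains to pass from $\vec{V}^N$ to $N^{\alpha/2}(\vec{Z}^N(t)-\vec{\pi}t)=\vec{V}^N(t)-N^{-\alpha/2}\vec{\tilde{X}}^N(t)^{\rm T}D$. Since $X_i^N(s)\in\{0,1\}$ for every $s$, the entries of $\vec{\tilde{X}}^N(s)$ lie in $\{-1,0,1\}$, whence $\sup_{s\le t}|N^{-\alpha/2}\vec{\tilde{X}}^N(s)^{\rm T}D|$ is deterministically $O(N^{-\alpha/2})$ and tends to $0$ uniformly on compacts, a fortiori in the Skorokhod topology. I expect the only genuinely delicate point to be this converging-together step: one must argue that subtracting a process that vanishes uniformly does not perturb the weak limit in $D[0,\infty)$. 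This is exactly the Slutsky/converging-together lemma for weak convergence in Skorokhod space, whose hypotheses hold trivially here because the discarded term converges to zero almost surely and uniformly. Combining the two steps yields $N^{\alpha/2}(\vec{Z}^N(t)-\vec{\pi}t)\Rightarrow\vec{W}_{C}$, which is precisely \eqref{doel} with the claimed covariance, completing the proof.
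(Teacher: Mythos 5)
Your proposal is correct and follows essentially the same route as the paper: it applies the martingale central limit theorem (Theorem \ref{MCLT}) to the martingale $\vec V^N$ from the lemma, using the already-computed quadratic covariation limit $C_{ij}t$, and then discards the uniformly vanishing term $N^{-\alpha/2}\vec{\tilde X}^N(t)^{\rm T}D$. Your treatment is in fact slightly more careful than the paper's at two points --- the explicit bound $N^{-\alpha/2}(D_{jk}-D_{ik})$ on the jump sizes, and the explicit invocation of the converging-together lemma in Skorokhod space --- where the paper simply asserts that the jumps vanish and that ``the first term of $\vec V^N(t)$ vanishes for $N\to\infty$.''
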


\section{A functional clt for the process $M^N(t)$}\label{sec:procM}
Using the {\sc f-clt} for the process ${\vec{Z}}^N(t)$, as established in the previous  section, 
we are now in a position to understand the limiting behavior of the main process of  interest, $M^N(t)$,
as $N$ grows large.  As before, we begin by considering the average behavior of the quantity of interest. Dividing both sides of \eqref{eqM} by $N$, and denoting $\bar  M^N(t) := N^{-1}M^N(t)$, we have
\begin{equation*}
	\bar  M^N(t) = \bar  M^N(0) + N^{-1}Y_1\left( N \sum_{i=1}^d\lambda_i Z_i^N(t)\right) - N^{-1} Y_2\left( N \mu \int_0^t \bar  M^N(s) ds\right).
\end{equation*}
Assuming that $\bar  M^N(0)$ converges almost surely to some value ${\varrho}_0$,  the use of Lemma \ref{LLN} in conjunction with Eqn.\ \eqref{eq:limitXint} yields that $\bar  M^N(t)$ converges almost surely to the solution of the deterministic integral equation
\begin{equation}
	{\varrho}(t) = {\varrho}_0 + \left(\sum_{i=1}^d\lambda_i\pi_i\right) t  - \mu \int_0^t {\varrho}(s)\DD s={\varrho}_0 + \lambda_\infty t - \mu\int_0^t {\varrho}(s) \DD s,\label{eq:touse_rho}
\end{equation}
with $\lambda_\infty:=\vec{\pi}^{\rm T}\vec{\lambda}$.
It is readily verified that this solution is given by
\begin{equation}\label{eq:rhot}
	{\varrho}(t) = {\varrho}_0e^{-\mu t} + \frac{\lambda_\infty}{\mu} (1 - e^{-\mu t}).
\end{equation}


As our goal is to derive a {\sc f-clt}, we center and scale the process $M^N(\cdot)$; this we do by subtracting
$N\varrho(\cdot)$, and dividing by $N^{1-\beta}$ for some $\beta>0$ to be specified later.
More concretely, we introduce the process
\[U^N_\beta(t) := N^\beta \big( \bar  M^N(t)  - {\varrho}(t) \big).\]
Letting $\beta >0$ be arbitrary (for the moment), we have that due to Eqn.\ \eqref{eq:touse_rho},
\begin{eqnarray*}
	N^\beta \left( \bar  M^N(t)  - {\varrho}(t) \right)
	&=& N^\beta\left(\bar  M^N(0) - \varrho_0\right) - N^\beta({\varrho}(t) - \varrho_0) \\
	&&\hspace{.02in}+ \,N^\beta \bigg( N^{-1}Y_1\left( N \sum_{i=1}^d\lambda_i Z_i^N(t)\right) - N^{-1} Y_2\left( N \mu \int_0^t \bar  M^N(s) \DD s\right)\bigg)\\[2ex]
		&= &N^\beta(\bar  M^N(0) - \varrho_0)-N^\beta\left(\lambda_\infty t - \mu \int_0^t {\varrho}(s) \DD s\right) \\
	&&\hspace{.02in}+ \,N^\beta \bigg( N^{-1}\tilde Y_1\left( N \sum_{i=1}^d\lambda_i Z_i^N(t)\right) - N^{-1} \tilde Y_2\left( N \mu \int_0^t \bar  M^N(s) \DD s\right)\bigg)\\
	&&\hspace{.02in} + \,N^{\beta} \sum_{i=1}^d\lambda_i Z_i^N(t) - N^\beta \mu \int_0^t \bar  M^N(s) \DD s.
\end{eqnarray*}
This identity can be written in a more convenient form by defining the process
\begin{align*}
	R_\beta(t) &:=  \tilde Y_1\left( N \sum_{i=1}^d\lambda_i Z_i^N(t)\right) -  \tilde Y_2\left( N \mu \int_0^t \bar  M^N(s) \DD s\right),
\end{align*}
which  is a martingale \cite{ANDE}. The resulting equation for $U_\beta^N(t)$ is
\begin{align}\label{eq:main_eqU}
	U^N_\beta(t) &= U^N_\beta(0) + N^{\beta-1}R_\beta(t) + N^\beta \left(\sum_{i=1}^d\lambda_i Z_i^N(t) - \lambda_\infty t\right) - \mu \int_0^t U^N_\beta(s) \DD s.
\end{align}
 We wish to establish the weak convergence of the process $U_\beta^N(t)$, as $N\to\infty$. We must simultaneously consider how to choose the parameter $\beta$. To do so we separately inspect the  terms involving $R_\beta(t)$ and $Z_i^N(t)$ in Eqn.\ \eqref{eq:main_eqU}.
 
First note that the sequence of martingales $\{N^{\beta-1}R_\beta(t)\}$, for $N\in{\mathbb N}$, clearly satisfies the first condition of Thm.\ \ref{MCLT}, that of vanishing jump sizes, under the condition that $\beta<1$, which we impose from now on. To obtain its weak limit, we compute its quadratic variation

\begin{equation}\label{eq:RbetaVar}
	\left[N^{\beta-1}R_\beta\right]_t = N^{2\beta-2}\left(Y_1\left( N \sum_{i=1}^d\lambda_i Z_i^N(t)\right) + Y_2\left( N \mu \int_0^t \bar  M^N(s) \DD s\right) \right).
\end{equation}
For this term to converge in accordance with Thm.\ \ref{MCLT}, we need $\beta \leq \frac{1}{2}$, which we impose from now on. With $\beta = \frac{1}{2}$, the term \eqref{eq:RbetaVar} will converge to $\lambda_\infty t + \mu\int_0^t {\varrho}(s) \DD s$. Choosing $\beta < \frac{1}{2}$ will take it to zero. Turning to the $Z_i^N$ terms of \eqref{eq:main_eqU}, by Prop.\ \ref{prop:zNormal}, and recalling that $\lambda_\infty=\vec{\pi}^{\rm T}\vec{\lambda}$,  we have that for $\beta = \alpha/2$,
\begin{align}\label{eq:lamZ}
N^\beta\left(\sum_{i=1}^d\lambda_i Z_i^N(t) - \lambda_\infty t\right) \Rightarrow \vec{\lambda} \cdot  \vec{W}_{C}(t),
\end{align}
which is distributionally equivalent to $W(\mbox{{\rm \TH}}t)$, where $W$ is a standard Brownian motion and
\begin{equation}\label{eq:thorn}
	{\mbox{{\rm \TH}}}:=\sum_{i=1}^d\sum_{j=1}^d\lambda_i\lambda_jC_{ij}. 
\end{equation}

If $\beta < \alpha/2$ the term on the left-hand side of \eqref{eq:lamZ} converges to zero.
Combining the above leads us to select $\beta = \min\{\alpha/2,1/2\}$.  In the sequel we distinguish between $\alpha > 1$, $\alpha < 1$, and $\alpha = 1.$

\vb

Before we treat the three cases, we first recapitulate the class of Ornstein-Uhlenbeck ({\sc ou}) processes. We say that $S(t)$ is {\sc ou}$(a,b,c)$ if it satisfies the stochastic differential equation 
({\sc sde}) 
\[\DD S(t) = (a-b\,S(t))\DD t + \sqrt{c}\,\DD W(t),\]
with $W(t)$ a standard Brownian motion. This {\sc sde} is solved by
\[S(t) = S(0) e^{-bt} +a\int_0^t e^{-b(t-s)}\DD s + \sqrt{c}\int_0^t e^{-b(t-s)}\DD W(s).\]
By using standard stochastic calculus it can be verified that (taking $u\le t$)
\begin{eqnarray}
{\mathbb E}S(t)&=&S(0)e^{-bt}+\frac{a}{b}(1-e^{-bt}),\\
\nonumber\VAR \,S(t) &=& \frac{c}{2b}(1-e^{-2bt}),\\
\nonumber\COV(S(t),S(u)) &=& \frac{ce^{-bu}}{2b}
\left(e^{bt}\hspace{-1mm}-e^{-bt}\right).
\end{eqnarray}\label{eq:OUprop}
For $t$ large, we see that
\[{\mathbb E}S(\infty)=\frac{a}{b},\quad\VAR \,S(\infty) = \frac{c}{2b},\quad\lim_{t\to\infty}\COV(S(t),S(t+u) )=  \frac{c}{2b}e^{-bu}
.\]
After this intermezzo, we now treat the three cases separately.

\subsection*{Case 1: $\alpha>1$} In this case we pick $\beta=1/2$. The term \eqref{eq:RbetaVar} converges  to
\[
\sum_{i=1}^d \lambda_i\pi_i t + \mu\int_0^t {\varrho}(s)\DD s = \lambda_{\infty}t + \mu\int_0^t{\varrho}(s)\DD s,
\]
while the term \eqref{eq:lamZ} converges to zero and is therefore neglected.
Hence, $U_{1/2}^N(t)$ converges in  distribution to the solution of 
\[
U_{1/2}(t) = U_{1/2}(0) + W\left(\lambda_{\infty}t + \mu\int_0^t{\varrho}(s)\DD s\right) - \mu\int_0^t U_{1/2}(s)\DD s,
\]
where $W$ is a standard Brownian motion. The above solution is distributionally equivalent to the solution of the It\^o formulation of the {\sc sde} 
\[
U_{1/2}(t) = U_{1/2}(0) + \int_0^t \sqrt{\lambda_{\infty} + \mu{\varrho}(s)}\DD W(s) - \mu\int_0^tU_{1/2}(s)\DD s.
\]
This {\sc sde} can be solved using standard techniques to obtain
\[
U_{1/2}(t) = e^{-\mu t}\left(U_{1/2}(0) + \int_0^t \sqrt{\lambda_{\infty} + \mu{\varrho}(s)}e^{\mu s} \DD W(s)\right)\hmmm.
\]
We now demonstrate how to compute the variance of 
$U_{1/2}(t)$. To this end, first recall that by virtue of \eqref{eq:rhot},
\begin{eqnarray*}
\VAR\, U_{1/2}(t) &= &\int_0^t (\lambda_{\infty} + \mu{\varrho}(s))e^{-2\mu(t-s)}\DD s \\
&=& \int_0^t \left(\lambda_{\infty} + \mu\left({\varrho}_0e^{-\mu s} + \frac{\lambda_{\infty}}{\mu}(1-e^{-\mu s})\right)\right)e^{-2\mu(t-s)}\DD s.
\end{eqnarray*}
After routine calculations, this yields
\[\VAR \,U_{1/2}(t) =\left( {\varrho}_0e^{-\mu t} + \frac{\lambda_{\infty}}{\mu}\right)(1 - e^{-\mu t}),
\]
cf.\ the expressions in \cite[Section 4]{KELL}.
In a similar fashion, we can derive that
\[\COV(U_{1/2}(t),U_{1/2}(t+u)) = e^{-\mu u}\left( {\varrho}_0e^{-\mu t} + \frac{\lambda_{\infty}}{\mu}\right)(1 - e^{-\mu t}).\]
It is seen that for $t\to\infty$ the limiting process behaves as {\sc ou}$(0,\mu,2\lambda_\infty)$. 

\subsection*{Case 2: $\alpha<1$}  In this case we pick $\beta = \alpha/2$ and the 
term~\eqref{eq:RbetaVar}, and therefore the term $N^{\beta-1}R_\beta(t)$, converges to zero, whereas the term \eqref{eq:lamZ} converges to
$W(\mbox{{\rm \TH}}t)$, where $W$ is a standard Brownian motion and \TH\ as defined by Eqn.\ \eqref{eq:thorn}.
Hence, $U_{\alpha/2}^N(t)$ converges weakly to the solution of
\[
U_{\alpha/2}(t) = U_{\alpha/2}(0) + W(\mbox{{\rm \TH}}t) - \mu\int_0^t U_{\alpha/2}(s)\DD s, \]

It is straightforward to solve this equation:
\[
U_{\alpha/2}(t) = e^{-\mu t}\left(U_{\alpha/2}(0) + \int_0^t \sqrt{{\mbox{{\rm \TH}}}}\,e^{\mu s}\DD W(s)\right).
\]
This process has variance
\begin{equation}
\VAR\,U_{\alpha/2}(t) = \int_0^t {\mbox{{\rm \TH}}} e^{-2\mu(t-s)} \DD s
= {\mbox{{\rm \TH}}} \frac{1-e^{-2\mu t}}{2\mu}.\label{eq:varalphaless}
\end{equation}
It is readily checked that this process is {\sc ou}$(0, \mu,{\mbox{{\rm \TH}}})$; this is due to
\[\COV(U_{\alpha/2}(t),U_{\alpha/2}(t+u)) ={\mbox{{\rm \TH}}} e^{-\mu u}\frac{1-e^{-2\mu t}}{2\mu}.\]

\subsection*{Case 3: $\alpha=1$}  In this case we put $\beta = 1/2$, and the terms 
$N^{\beta-1}R_\beta(t)$ and \eqref{eq:lamZ} are of the same order. Hence,  their sum
converges weakly to
\[
W\left(\lambda_{\infty}t + \mu\int_0^t {\varrho}(s)ds + {\mbox{{\rm \TH}}} t\right),
\]
where $W$ is a standard Browian motion.
In this case $U_{1/2}^N(t)$ converges weakly to the solution of 
\begin{align*}
U_{1/2}(t) 
&= U_{1/2}(0) + W\left(\int_0^t\left(\lambda_{\infty} + {\mbox{{\rm \TH}}} + \mu {\varrho}(s)\right)\DD s\right) - \mu\int_0^t U_{1/2}(s)\DD s.
\end{align*}
Solving the above in a similar fashion to cases 1 and 2 yields 
\[
U_{1/2}(t) = e^{-\mu t}\left(U_{1/2}(0) + \int_0^t \sqrt{\lambda_{\infty} + {\mbox{{\rm \TH}}} + \mu{\varrho}(s)}\,e^{\mu s} \DD W(s)\right).
\]
with the corresponding variance
\begin{align*}
\VAR \,U_{1/2}(t) &= \int_0^t \left(\lambda_{\infty} + {\mbox{{\rm \TH}}}+ \mu{\varrho}(s)\right)e^{-2\mu(t-s)}\DD s \\
&= \left({\varrho}_0e^{-\mu t} + \frac{\lambda_{\infty}}{\mu}\right)(1 - e^{-\mu t}) + \frac{{\mbox{{\rm \TH}}}}{2\mu}(1-e^{-2\mu t})
\end{align*}
and covariance
\[\COV(U_{1/2}(t),U_{1/2}(t+u)) = e^{-\mu u}\left( \left({\varrho}_0e^{-\mu t} + \frac{\lambda_{\infty}}{\mu}\right)(1 - e^{-\mu t}) + \frac{{\mbox{{\rm \TH}}}}{2\mu}(1-e^{-2\mu t})\right)\hmmm.\]
For $t$ large this process behaves as {\sc ou}$(0, \mu,2\lambda_\infty+{\mbox{{\rm \TH}}})$.

\vb

We summarize the above results in the following theorem; it is the {\sc f-clt} for $M^N(t)$ that we wished to establish.
It identifies the Gauss-Markov process to which $M^N(\cdot)$ weakly converges, after centering and scaling; this limiting process behaves, modulo the effect of the initial value $\varrho_0$, as an {\sc ou} process.
More specifically, the theorem describes the limiting behavior of the centered and normalized version $U^N_\beta(\cdot)$ of $M^N(\cdot)$: the focus is on the process
\begin{equation}\label{eq:U}
U_\beta^N(t) =N^\beta\left(\bar M^N(t)-\varrho(t)\right) =\frac{M^N(t)-N\varrho(t)}{N^{1-\beta}}.
\end{equation}
It is observed that for $\alpha\ge 1$, we have the usual $\sqrt{N}$ {\sc clt}-scaling;
for $\alpha<1$, however, the normalizing polynomial is $N^{1-\alpha/2}$, that is  $\beta=\min\{\alpha/2,1/2\}$.

\begin{theorem}\label{ST}
 As $N\to\infty$, the process $U_\beta^N(t)$ converges in distribution to the solution of
  \[
  U_\beta(t) = e^{-\mu t}\left(U_\beta(0) + \int_0^t \sigma(s) e^{\mu s} \,\DD W(s)\right)
 \]
where 
\[ \sigma(s) := 
\begin{cases}
  \sqrt{\lambda_{\infty} + \mu{\varrho}(s)}, & \alpha>1,\ \beta=1/2; \\
  \sqrt{\,{\mbox{{\rm \TH}}}}, & \alpha<1,\ \beta=\alpha/2; \\
  \sqrt{\lambda_{\infty} + {\mbox{{\rm \TH}}}+ \mu{\varrho}(s)}, & \alpha=1,\ \beta=1/2, \\
\end{cases}
\]

$W$ is standard Brownian motion and ${\mbox{{\rm \TH}}}=\sum_{i=1}^d\sum_{j=1}^d\lambda_i\lambda_jC_{ij}$. 
\end{theorem}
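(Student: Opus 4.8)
The plan is to read the single integral equation \eqref{eq:main_eqU} as an implicit definition of $U^N_\beta$ in terms of a driving process, and then to transport the weak convergence of that driving process through the (continuous) solution map of the linear equation. Concretely, fix $\beta=\min\{\alpha/2,1/2\}$ and rewrite \eqref{eq:main_eqU} as
\[
U^N_\beta(t) = U^N_\beta(0) + G^N(t) - \mu\int_0^t U^N_\beta(s)\,\DD s,
\]
where
\[
G^N(t) := N^{\beta-1}R_\beta(t) + N^\beta\Big(\sum_{i=1}^d \lambda_i Z_i^N(t) - \lambda_\infty t\Big).
\]
The first step is to identify the weak limit of $G^N$. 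The martingale term $N^{\beta-1}R_\beta$ has jumps of size $N^{\beta-1}$, which vanish since $\beta<1$, and quadratic variation \eqref{eq:RbetaVar}; by Lemma \ref{LLN} together with $\bar M^N\to\varrho$ this converges to $\lambda_\infty t+\mu\int_0^t\varrho(s)\,\DD s$ when $\beta=1/2$ and to $0$ when $\beta<1/2$, so Theorem \ref{MCLT} gives its weak limit as a time-changed Brownian motion. The second term is governed by Proposition \ref{prop:zNormal}: for $\beta=\alpha/2$ it converges to $\vec{\lambda}\cdot\vec{W}_{C}$, equivalently $W(\mbox{{\rm \TH}}\,t)$ via \eqref{eq:lamZ}, and for $\beta<\alpha/2$ it vanishes.

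The second step is to combine these two limits into one driving Brownian motion. Because $R_\beta$ is assembled from the arrival/departure Poisson processes $Y_1,Y_2$, whereas the $Z_i^N$ are assembled from the background-transition processes $Y_{i,j}$, and all of these are independent, the two contributions to $G^N$ are asymptotically independent, so their variances add. This is precisely what produces the three regimes: for $\alpha>1$ only the $R_\beta$ part survives, yielding $\sigma(s)^2=\lambda_\infty+\mu\varrho(s)$; for $\alpha<1$ only the $Z^N$ part survives, yielding $\sigma(s)^2=\mbox{{\rm \TH}}$; and for $\alpha=1$ both terms are of the same order and the variances sum to $\lambda_\infty+\mbox{{\rm \TH}}+\mu\varrho(s)$. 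In every case one obtains $G^N\Rightarrow W\big(\int_0^t\sigma(s)^2\,\DD s\big)$ with $\sigma$ as stated, and $G^N(0)=0$.

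The final step is a continuous-mapping argument. Variation of constants (equivalently, integration by parts on the Stieltjes integral, using $G^N(0)=0$) solves the linear equation explicitly as
\[
U^N_\beta(t) = e^{-\mu t}U^N_\beta(0) + G^N(t) - \mu\int_0^t e^{-\mu(t-s)}G^N(s)\,\DD s,
\]
which exhibits $U^N_\beta$ as the image of $(U^N_\beta(0),G^N)$ under a map $\Phi$ on $D[0,\infty)$ that is continuous at every path whose $G$-component is continuous. Since the limit of $G^N$ is almost surely continuous, the continuous mapping theorem gives $U^N_\beta\Rightarrow U_\beta$, where $U_\beta$ solves the limiting equation; solving that equation in the same way yields the asserted form $U_\beta(t)=e^{-\mu t}\big(U_\beta(0)+\int_0^t\sigma(s)e^{\mu s}\,\DD W(s)\big)$.

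I expect the main obstacle to lie in making the asymptotic-independence argument of the second step rigorous at the process level: one must establish \emph{joint} weak convergence of the pair $\big(N^{\beta-1}R_\beta,\,N^\beta(\sum_i\lambda_i Z_i^N-\lambda_\infty t)\big)$ rather than only the two marginals, and confirm that the off-diagonal cross-covariation vanishes so that the limiting Brownian motions are genuinely independent and their variances add (this is what legitimizes the $\alpha=1$ formula). Verifying the continuity of $\Phi$ and checking the hypotheses of Theorem \ref{MCLT} are comparatively routine given the groundwork already in place.
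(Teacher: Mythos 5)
Your proposal is correct and follows essentially the same route as the paper: the same decomposition of \eqref{eq:main_eqU} into the martingale term $N^{\beta-1}R_\beta(t)$ and the state-frequency term, the same use of Theorem~\ref{MCLT}, Lemma~\ref{LLN} and Proposition~\ref{prop:zNormal} with $\beta=\min\{\alpha/2,1/2\}$, and the same three-case analysis producing the stated $\sigma(s)$. The two points you flag for extra care are exactly the ones the paper leaves implicit: the variation-of-constants/continuous-mapping step behind ``converges in distribution to the solution of,'' and, for $\alpha=1$, the joint convergence of the two contributions, which is justified exactly as you indicate---the driving Poisson processes are independent, so the corresponding martingales almost surely never jump simultaneously, their cross-covariation vanishes identically, and the multivariate form of Theorem~\ref{MCLT} yields independent limiting Brownian motions whose variances add.
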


\vb
\section{Discussion and an Example}
Above we identified two crucially different scaling regimes: $\alpha>1$ and $\alpha<1$
(where the boundary case of $\alpha=1$ had to be dealt with separately). In case $\alpha>1$, the background process evolves fast relative to the arrival process, and as a consequence  the arrival stream is effectively Poisson with rate $N\lambda_\infty$. When the arrival process is simplified in such a way, the system essentially behaves as an M/M/$\infty$ queue. This regime was discussed in greater detail in e.g.\ \cite{KELL}, focusing on convergence of the finite-dimensional distributions.
On the other hand, for $\alpha < 1$ the arrival rate is sped up more than the background process. Intuitively, then the  system {settles} in a temporary (or  local) equilibrium. 
%

\subsection{A two-state example}
In this example we numerically study the limiting behavior of $U^N_\beta(t):=N^\beta\,(\bar M^N(t) - \varrho(t))$ with $\beta=\min\{\alpha/2,1/2\}$ (as $N\to\infty$) in a two-state system for different $\alpha$. For various values of $N$, we compute the moment generating function ({\sc mgf}) of $U^N_\beta(t)$ by numerically evaluating the system of differential equations
derived in \cite[Section 3.1]{KELL}.
We have shown that the limiting distribution of $U^N_\beta(t)$ is Gaussian with specific parameters. We now explain how
the {\sc mgf} of the limiting random variable can be computed. Introducing the notation $\Lambda(t,\theta) := {\mathbb E} e^{\theta U_\beta(t)}$ and $\Lambda^N(t,\theta) := {\mathbb E} e^{\theta U^N_\beta(t)}$, it is immediate from Thm.\ \ref{ST} that we have
\begin{equation}\label{eq:limitalphalarger}
\Lambda(t,\theta)=\exp\left(\frac{\theta^2}{2} \left[\left( {\varrho}_0e^{-\mu t} + \frac{\lambda_{\infty}}{\mu}\right)(1 - e^{-\mu t}) 
\,1_{\{\alpha\ge 1\}}+ {\mbox{{\rm \TH}}} \frac{(1-e^{-2\mu t})}{2\mu}
\,1_{\{\alpha\le 1\}}\right]\right),
\end{equation}
In the regime that $\alpha \le 1$, we need to evaluate the parameter ${\mbox{{\rm \TH}}} $, which can be easily computed for $d=2$.
For the generator matrix $Q=(q_{ij})_{i,j=1}^2$, let $q_i := -q_{ii}$ and note that $q_i>0$.
With $\bar{q} := q_1+q_2$, the matrix exponential is given by
\[
 e^{Qt} = \frac{1}{\bar{q}} \left[ \begin{array}{cc}
q_2 + q_1 e^{-\bar{q}t} & q_1 - q_1 e^{-\bar{q}t} \\
q_2 - q_2 e^{-\bar{q}t} & q_1 + q_2e^{-\bar{q}t} \end{array} \right].
\]
Since $\pi_1 = q_2/\bar{q}$ and $\pi_2 = q_1/\bar{q}$, we can now compute the components of the deviation matrix $D$ (see Eqn.\ \eqref{eq:deviation}) and covariance matrix $C$:
\begin{equation*}
 D 
  = \frac{1}{\bar{q}^2}
  \left[ \begin{array}{cc}
          q_1 & q_1 \\
          -q_2 & q_2
         \end{array}\right] \:\:\mbox{,
 }\:\: C = \frac{2q_1 q_2}{\bar{q}^3}\left[ \begin{array}{cc}
1 & -1 \\
-1 & 1 \end{array} \right].
\end{equation*}
From the above we find the value of ${\mbox{{\rm \TH}}}$:
\[
{\mbox{{\rm \TH}}} = \frac{2q_1 q_2}{\bar{q}^3}(\lambda_1 - \lambda_2)^2.
\]

Fig.\ \ref{fig:alphas} illustrates the convergence of $U^N_\beta(t)$ to $U_\beta(t)$ when $\vec{q}=(1,3)$, $\vec{\lambda} = (1,4)$ and $\mu=1$. We assume $\varrho_0 = 0$ and let $\theta=0.5$. In Figs.\ \ref{fig:alphas}.({\sc a})--({\sc c}) we see the effect of $\alpha$. Fig.\ \ref{fig:alphas}.({\sc d}) depicts the convergence rate, computed as
\[
 \max_{t\ge 0} \left|\Lambda^N(t,\theta) - \Lambda(t,\theta) \right|.
\]
We observe a roughly loglinear convergence speed for $\alpha\geq 1$, whereas for $\alpha <1$ the convergence turns out to be substantially slower.


\begin{figure}
        \centering
        \begin{subfigure}[b]{0.6\textwidth}
                \centering
                \includegraphics[scale=0.6, trim=3cm 8cm 0cm 6cm]{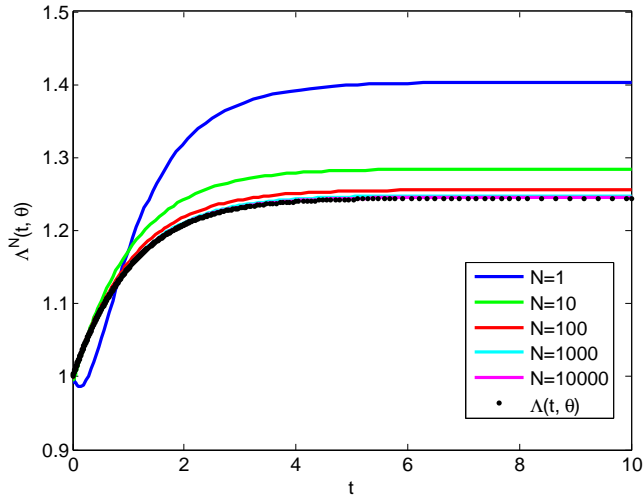}
                \caption{$\alpha=1.5$}
                \label{fig:alpha15}
        \end{subfigure}%
        ~ 
        \begin{subfigure}[b]{0.6\textwidth}
                \centering
                \includegraphics[scale=0.6, trim=6cm 8cm 4cm 6cm]{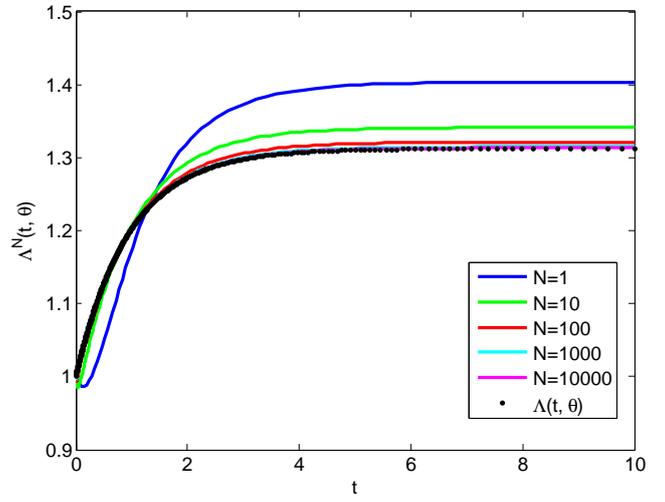}
                \caption{$\alpha=1$}
                \label{fig:alpha1}
        \end{subfigure}
        
        ~ 
        \begin{subfigure}[b]{0.6\textwidth}
                \centering
                \includegraphics[scale=0.6, trim=3cm 8cm 0cm 6cm]{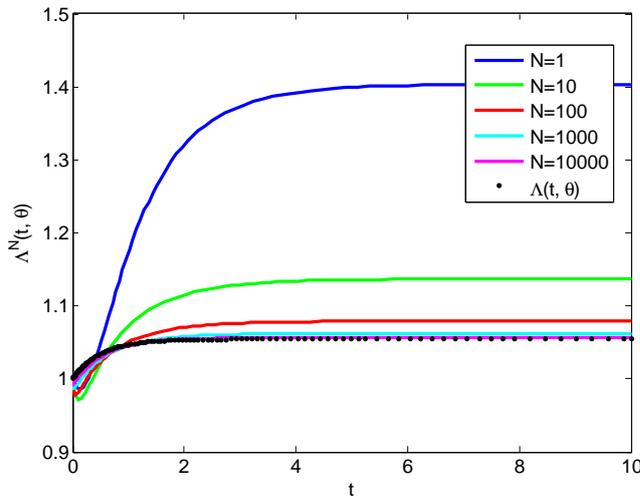}
                \caption{$\alpha=0.5$}
                \label{fig:alpha05}
        \end{subfigure}
        ~
        \begin{subfigure}[b]{0.6\textwidth}
                \centering
                \includegraphics[scale=0.6, trim=6cm 8cm 4cm 6cm]{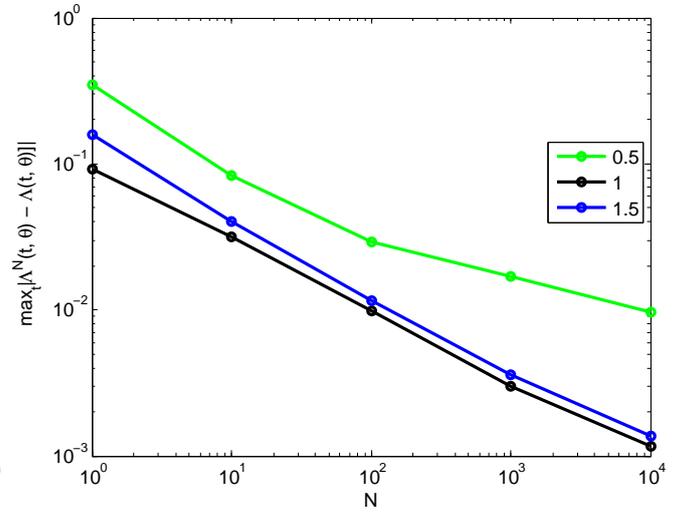}
                \caption{Error with $\alpha \in \{0.5,1,1.5\}$.}
                \label{fig:error}
        \end{subfigure}
	        \caption{Convergence of the {\sc mgf} of $U_\beta^N(t)$ as a function of $t\ge 0$. Panels ({\sc a})-({\sc c}) correspond to three values of $\alpha$; panel ({\sc d}) depicts the convergence rate as a function of $N$.\label{fig:alphas}}
\end{figure}

\begin{acknowledgments} We wish to thank T.\ Kurtz (University of Wisconsin -- Madison) for pointing us at several useful references.
\end{acknowledgments}

\bibliographystyle{plain}
\bibliography{bibdDim}

\end{document}